\theoremstyle{definition}
\newtheorem{theorem}{Theorem}[section]
\newtheorem{lemma}[theorem]{Lemma}
\begin{document}
\title[Analogs of Kirchberg's embedding theorem]{Operator space and operator
system analogs of Kirchberg's nuclear embedding theorem}
\author{Martino Lupini}
\address{Martino Lupini\\
Fields Institute for Mathematical Sciences\\
222 College Street\\
Toronto ON M5T 3J1, Canada.}
\email{mlupini@yorku.ca}
\urladdr{http://www.lupini.org/}
\thanks{The author was partially supported by the York University Susan Mann
Dissertation Scholarship. Part of this work was done while the author was a
visitor at the Institut des Hautes \'{E}tudes Scientifiques. The hospitality
of the Institute is gratefully acknowledged.}
\subjclass[2000]{Primary 46L07; Secondary 03C30, 46L89}
\keywords{Operator space, operator system, Gurarij operator space, Gurarij operator system,
nuclearity, exactness, Fra\"{\i}ss\'{e} limit}
\dedicatory{}

\begin{abstract}
The Gurarij operator space $\mathbb{NG}$ introduced by Oikhberg is the
unique separable $1$-exact operator space that is approximately injective in
the category of $1$-exact operator spaces and completely isometric linear
maps. We prove that a separable operator space $X$ is nuclear if and only if
there exist a linear complete isometry $\varphi :X\rightarrow \mathbb{NG}$
and a completely contractive projection from $\mathbb{NG}$ onto the range of 
$\varphi $. This can be seen as the operator space analog of Kirchberg's
nuclear embedding theorem. We also establish the natural operator system
analog of Kirchberg's nuclear embedding theorem involving the Gurarij
operator system $\mathbb{GS}$.
\end{abstract}

\maketitle


\section{Introduction\label{Section:introduction}}

Nuclearity and exactness are properties of fundamental importance for the
theory of C*-algebras and the classification program. The celebrated
Kirchberg exact embedding theorem characterizes (up to *-isomorphism) the
separable exact C*-algebras as the C*-subalgebras of the Cuntz algebra $%
\mathcal{O}_{2}$ \cite[Theorem 6.3.11]{rordam_classification_2002}; see also %
\cites{kirchberg_exact_1995,kirchberg_embedding_2000}. Furthermore the
Kirchberg nuclear embedding theorem asserts that (up to *-isomorphism) the
separable nuclear C*-algebras are precisely the C*-subalgebras of $\mathcal{O%
}_{2}$ that are the range of a (completely) contractive projection \cite[%
Theorem 6.3.12]{rordam_classification_2002}. The Cuntz algebra $\mathcal{O}%
_{2}$, initially introduced and studied in \cite{cuntz_simple_1977}, is the
C*-algebra generated by two isometries of the Hilbert space with
orthogonally complementary ranges.

The main result of this paper is an analog of Kirchberg's nuclear embedding
theorem for operator spaces involving the \emph{(noncommutative) Gurarij
operator space}. This is the unique separable $1$-exact operator space $%
\mathbb{NG}$ which is approximately injective in the category of $1$-exact
operator spaces with completely isometric maps. In other words $\mathbb{NG}$
is characterized by the following property: for any $n\in \mathbb{N}$, $%
\varepsilon >0$, finite-dimensional $1$-exact operator spaces $E\subset F$,
and complete isometry $\phi :E\rightarrow \mathbb{NG}$, there exists a
complete isometry $\psi :F\rightarrow \mathbb{NG}$ such that $\left\Vert
\psi _{|E}-\phi \right\Vert \leq \varepsilon $. Such a space can be thought
as a noncommutative analog of the Gurarij Banach space $\mathbb{G}$, which
is defined by the same property above where one considers Banach spaces
instead of $1$-exact operator spaces %
\cites{gurarij_spaces_1966,lusky_gurarij_1976,kubis_proof_2013}.

Existence of the Gurarij operator space was first proved in \cite%
{oikhberg_non-commutative_2006}, while uniqueness was established in \cite%
{lupini_uniqueness_2014}. It is worth mentioning here that, albeit being $1$%
-exact, $\mathbb{NG}$ does not admit any completely isometric embedding into
an exact C*-algebra by \cite[Corollary 4.16]{lupini_universal_2014}. In
particular $\mathbb{NG}$ is not completely isometric to a C*-algebra.

A separable operator space is $1$-exact if and only if it admits a
completely isometric embedding into $\mathbb{NG}$ \cite[Theorem 4.3]%
{lupini_uniqueness_2014}. (Modulo uniqueness of $\mathbb{NG}$, this also
follows from \cite[Theorem 1.1]{oikhberg_non-commutative_2006} and the proof
of \cite[Theorem 4.7]{effros_injectivity_2001}.) Such a result can be
regarded as an operator space version of Kirchberg's exact embedding
theorem. In this paper we prove the natural analog of Kirchberg's nuclear
embedding theorem for operator spaces: the separable nuclear operator spaces
are (up to completely isometric isomorphism) precisely the subspaces of $%
\mathbb{NG}$ that are the range of a completely contractive projection; see
Theorem \ref{Theorem:embedding-sp}.

We also observe that all the results mentioned above hold for operator
systems, as long as one only considers \emph{unital }linear maps, and
replaces the Gurarij operator space $\mathbb{NG}$ with the Gurarij operator
system $\mathbb{GS}$. Such an operator system is uniquely characterized
among separable $1$-exact operator systems similarly as $\mathbb{NG}$, where
one considers complete order embeddings instead of complete isometries.
Existence and uniqueness of such an operator system, as well as universality
among separable $1$-exact operator systems, have been established in \cite%
{lupini_universal_2014}. The analog of Kirchberg's nuclear embedding theorem
in this context asserts that the separable $1$-exact operator systems are
(up to complete order isomorphism) precisely the subsystems of $\mathbb{GS}$
that are the range of a unital completely positive projection; see Theorem %
\ref{Theorem:embedding-sy}.

It is worth mentioning that it should come as no surprise that, while the
Kirchberg's embedding theorems involve the Cuntz algebra $\mathcal{O}_{2}$,
their operator space and operator system versions involve the Gurarij
operator space and operator system. In fact by \cite[Theorem 1.13 and
Theorem 2.8]{kirchberg_embedding_2000} any separable exact C*-algebra embeds
into $\mathcal{O}_{2}$, and any two unital embeddings of a separable unital
exact C*-algebra into $\mathcal{O}_{2}$ are approximately unitarily
equivalent. It therefore follows from \cite[Theorem 2.21]%
{ben_yaacov_fraisse_2012} that finitely generated exact unital C*-algebras
form a Fra\"{\i}ss\'{e} class with limit $\mathcal{O}_{2}$. The Gurarij
operator space and operator systems can be similarly described as the Fra%
\"{\i}ss\'{e} limits in the sense of \cite{ben_yaacov_fraisse_2012} of the
classes of finitely-generated $1$-exact operator spaces and operator
systems; see \cites{lupini_uniqueness_2014,lupini_universal_2014}. It is
therefore natural to expect that they exhibit similar properties as $%
\mathcal{O}_{2}$ in the respective categories.

The present paper is organized as follows. In Section \ref{Section:spaces}
we prove the above mentioned results concerning operator spaces. We recall a
few basic notions about operator spaces in Subsection \ref{Sub:background-sp}%
. Subsection \ref{Sub:amalgamation-sp} contains canonical approximate
amalgamation results for operator spaces, while a characterization of
nuclearity is recalled in\ Subsection \ref{Sub:nuc}. The operator space
analog of Kirchberg's nuclear embedding theorem is then proved in Subsection %
\ref{Sub:emb-sp}. The analogous results for operator systems are stated in
Section \ref{Section:systems}. The proofs are similar, and only the relevant
changes to be made from the operator space case will be pointed out.

\subsection*{Acknowledgments}

We are grateful to Isaac Goldbring, Ilijas Farah, Timur Oikhberg, and Todor
Tsankov for useful conversations concerning the subject of this article, to
Eusebio Gardella for providing references for Kirchberg's embedding
theorems, and to Ali Kavruk for referring us to \cite{xhabli_super_2012}.

\section{Nuclear operator spaces\label{Section:spaces}}

\subsection{Some background notions on operator spaces\label%
{Sub:background-sp}}

An \emph{operator space} $X$\emph{\ }is a closed subspace of the algebra $%
B\left( H\right) $ of bounded linear operators on a complex Hilbert space $H$%
. The identification of the space $M_{n}\left( X\right) $ of $n\times n$
matrices with entries in $X$ with a space of operators on the $n$-fold
Hilbertian sum $H^{\oplus n}$ induces, for every $n\in \mathbb{N}$, a norm $%
M_{n}\left( X\right) $ (the operator norm). The \emph{matricially normed }%
vector spaces that arise in this way have been characterized by Ruan \cite[%
Theorem 3.1]{ruan_subspaces_1988} as those satisfying the following
condition:%
\begin{equation}
\left\Vert \sum_{i=1}^{m}\alpha _{i}^{\ast }.x_{i}.\beta _{i}\right\Vert
\leq \left\Vert \sum_{i=1}^{m}\alpha _{i}^{\ast }\alpha _{i}\right\Vert
\max_{1\leq i\leq m}\left\Vert x_{i}\right\Vert \left\Vert
\sum_{i=1}^{m}\beta _{i}^{\ast }\beta _{i}\right\Vert \text{\label%
{Equation:op-sp}}
\end{equation}%
where $m\in \mathbb{N}$, $\alpha _{i},\beta _{i}\in M_{k\times n}$ (the
space of complex $k\times n$ matrices), $x_{i}\in M_{k}\left( X\right) $,
the norm of scalar matrices is the operator norm, and $\alpha _{i}^{\ast
}.x_{i}.\beta _{i}$ denotes the row-column matrix multiplication. The
identification of $M_{n}$ with $B\left( H\right) $ where $H$ is the $n$%
-dimensional Hilbert space induces a canonical operator space structure on
the space $M_{n}$ of $n\times n$ complex matrices.

If $\varphi :X\rightarrow Y$ is a linear map between operator spaces, its $n$%
-th\emph{\ amplification} is the map $\varphi ^{\left( n\right)
}:M_{n}\left( X\right) \rightarrow M_{n}\left( Y\right) $ obtained by
applying $\varphi $ entrywise. We set $||\varphi ||_{n}:=||\varphi ^{\left(
n\right) }||$ and $||\varphi ||_{cb}:=\sup_{n}||\varphi ||_{n}$. The map $%
\varphi $ is \emph{completely bounded} if $||\varphi ||_{cb}<+\infty $, 
\emph{completely contractive} if $||\varphi ||_{cb}\leq 1$, and \emph{%
completely isometric} if $\varphi ^{\left( n\right) }$ is isometric for
every $n\in \mathbb{N}$. \emph{In the following all the maps are supposed to
be linear}.

If $X$ is an operator space and $q\in \mathbb{N}$, then $\mathrm{\mathrm{MI}N%
}_{q}\left( X\right) $ is the operator space with same linear structure as $%
X $ and matrix norms%
\begin{equation*}
\left\Vert x\right\Vert _{M_{k}(\mathrm{MIN}_{q}\left( X\right)
)}=\sup_{\phi }\left\Vert \phi ^{\left( k\right) }\left( x\right)
\right\Vert _{M_{k}(M_{q})}
\end{equation*}%
for $x\in M_{k}\left( X\right) $, where $\phi $ ranges over all complete
contractions from $X$ to $M_{q}$. The space $\mathrm{MIN}_{q}\left( X\right) 
$ is characterized by the following property \cite[\S 2]%
{oikhberg_operator_2004}: for any operator space $Z$ and linear map $\varphi
:Z\rightarrow X$, one has that%
\begin{equation*}
\left\Vert \varphi :Z\rightarrow \mathrm{MIN}_{q}\left( X\right) \right\Vert
_{cb}=\left\Vert \varphi :Z\rightarrow X\right\Vert _{q}\text{.}
\end{equation*}%
Following \cite{lehner_mn-espaces_1997}, we say that $X$ is an $M_{q}$\emph{%
-space} if the identity map $id:\mathrm{MIN}_{q}\left( X\right) \rightarrow
X $ is a complete isometry. In an $M_{q}$-space $X$ the norms on $%
M_{k}\left( X\right) $ for $k\neq q$ are completely determined from the norm
in $M_{q}\left( X\right) $. Therefore one can regard an $M_{q}$-space as a
linear space $X$ with a norm on $M_{q}\left( X\right) $. The spaces arising
in this way can be abstractly characterized by the analog of Equation (\ref%
{Equation:op-sp}) where only matrices of size $q$ are considered \cite[Th%
\'{e}or\`{e}me I.1.9]{lehner_mn-espaces_1997}. Furthermore $M_{q}$-spaces
are precisely the operator spaces that admit completely isometric embedding
into $C\left( K,M_{q}\right) $ for some compact Hausdorff space $K$.
Adopting this point of view, \emph{Banach spaces} can be identified with $%
M_{1}$-spaces. The characterizing property of $\mathrm{MIN}_{q}$ shows that
if $\varphi :X\rightarrow Y$ is a linear map between $M_{q}$-spaces, then $%
\left\Vert \varphi \right\Vert _{cb}=\left\Vert \varphi \right\Vert _{q}$.
Furthermore the inclusion functor from the category of $M_{q}$-spaces to the
category of operator spaces determines an equivalence of categories with
inverse the functor $\mathrm{MIN}_{q}$.

A finite-dimensional operator space $X$ is defined to be $1$\emph{-exact }if
for every $\varepsilon >0$ there exist $n\in \mathbb{N}$ and a completely
contractive injective map $\varphi :X\rightarrow M_{n}$ such that $%
\left\Vert \varphi ^{-1}\right\Vert _{cb}\leq 1+\varepsilon $ \cite[\S 17]%
{pisier_introduction_2003}. (Here and in the following, if $\varphi $ is an
injective not necessarily surjective map, we denote by $\varphi ^{-1}$ the
inverse of $\varphi $ regarded as a map from the range of $\varphi $ to the
domain of $\varphi $.) Equivalently $X$ is $1$-exact if and only if for
every $\varepsilon >0$ there exists $q\in \mathbb{N}$ such that%
\begin{equation*}
\left\Vert id_{X}:\mathrm{MIN}_{q}\left( X\right) \rightarrow X\right\Vert
_{cb}\leq 1+\varepsilon \text{.}
\end{equation*}%
An arbitrary operator space $X\ $is then $1$-exact if every
finite-dimensional subspace of $X$ is $1$-exact. Clearly any $M_{q}$-space
is $1$-exact, and the $1$-exact operator spaces are those that can be
locally approximated by $M_{q}$-spaces.

An operator space $X$ is nuclear \cite[\S 14.6]{effros_operator_2000} if for
every finite subset $A$ of $X$ and $\varepsilon >0$ there exist $n\in 
\mathbb{N}$ and completely contractive maps $\varphi :X\rightarrow M_{n}$
and $\psi :M_{n}\rightarrow X$ such that $\left\Vert \left( \psi \circ \phi
\right) \left( x\right) -x\right\Vert \leq \varepsilon $ for every $x\in A$.
A nuclear operator space is in particular $1$-exact \cite[Corollary 14.6.2]%
{effros_operator_2000}. More details about operator spaces can be found in
the monographs %
\cites{effros_operator_2000,pisier_introduction_2003,blecher_operator_2004,paulsen_completely_2002}%
. The notion of $M_{q}$-space has been introduced and studied in \cite%
{lehner_mn-espaces_1997}, and used in 
\cites{oikhberg_operator_2004,oikhberg_non-commutative_2006,
lupini_uniqueness_2014}.

The Gurarij operator space $\mathbb{NG}$ has been defined in \cite%
{oikhberg_non-commutative_2006} to be a separable $1$-exact operator space
satisfying the following approximate injectivity property: whenever $%
E\subset F$ are finite-dimensional $1$-exact operator spaces, $\varphi
:E\rightarrow \mathbb{NG}$ is a complete isometry, and $\varepsilon >0$,
then $\phi $ can be extended to a linear map $\widehat{\phi }:F\rightarrow 
\mathbb{NG}$ such that $||\widehat{\phi }||_{cb}{}||\widehat{\phi }%
^{-1}||_{cb}\leq 1+\varepsilon $. The uniqueness of such a space up to
complete isometry has been proved in \cite{lupini_uniqueness_2014}, where
moreover several equivalent characterization of $\mathbb{NG}$ can be found;
see \cite[Proposition 4.8]{lupini_uniqueness_2014}. It is furthermore proved
therein that a separable operator space is $1$-exact if and only if it
embeds completely isometrically into $\mathbb{NG}$ \cite[Theorem 4.9]%
{lupini_uniqueness_2014}. This can be regarded as an operator space analog
of Kirchberg's exact embedding theorem \cite[Theorem 6.3.11]%
{rordam_classification_2002}. Theorem \ref{Theorem:embedding-sp} below
provides an operator space analog of Kirchberg's nuclear embedding theorem:
a separable operator space $X$ is nuclear if and only if it embeds
completely isometrically as a subspace of $\mathbb{NG}$ that is the range of
a completely contractive projection.

\subsection{Amalgamation of operator spaces\label{Sub:amalgamation-sp}}

The proof of the following lemma is similar to the proof of \cite[Lemma 3.1]%
{lupini_uniqueness_2014}. We present here the main ideas for convenience of
the reader.

\begin{lemma}
\label{Lemma:pushout-sp}Suppose that $X,\widehat{X},Y$ are $M_{q}$-spaces, $%
k\in \mathbb{N}$, and $\delta \geq 0$. Let $\phi :X\rightarrow \widehat{X}$
and $f:X\rightarrow Y$ be linear maps such that $\phi $ is injective, and $%
\left\Vert f\circ \phi ^{-1}\right\Vert _{cb}\leq 1+\delta $. Then there
exist an $M_{q}$-space $\widehat{Y}$, a complete contraction $\widehat{f}:%
\widehat{X}\rightarrow \widehat{Y}$, and a complete isometry $j:Y\rightarrow 
\widehat{Y}$ such that $\left\Vert j\circ f-\widehat{f}\circ \phi
\right\Vert _{k}\leq \delta $. If $f$ is injective and $\left\Vert \phi
\circ f^{-1}\right\Vert _{cb}\leq 1+\delta $, then $\widehat{f}$ is
completely isometric. If $\widehat{X},Y$ are finite-dimensional, then $%
\widehat{Y}$ is finite-dimensional. The space $\widehat{Y}$ has the
following property. If $Z$ is an $M_{q}$-space, and $g:\widehat{X}%
\rightarrow Z$, $i:Y\rightarrow Z$ are completely contractive maps such that 
$\left\Vert i\circ f-g\circ \phi \right\Vert _{k}\leq \delta $, then there
exists a unique linear map $\tau :\widehat{Y}\rightarrow Z$ such that $\tau
\circ \widehat{f}=g$ and $\tau \circ j=i$. Moreover $\tau $ is completely
contractive.
\end{lemma}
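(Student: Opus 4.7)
My plan is to define $\widehat{Y}$ directly through its stated factorization property, which makes the universal part of the lemma automatic, and then verify the geometric properties by exhibiting one carefully chosen admissible triple.

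Let $V = \widehat{X} \oplus_{1} Y$ denote the operator space $\ell^{1}$-direct sum. For $v \in M_{n}(V)$ I set
$$\|v\|_{M_{n}(\widehat{Y})} := \sup \bigl\{ \|(g \oplus i)^{(n)}(v)\|_{M_{n}(Z)} \bigr\},$$
where the supremum ranges over admissible triples $(Z,g,i)$: $Z$ is an $M_{q}$-space and $g\colon\widehat{X}\to Z$, $i\colon Y\to Z$ are complete contractions with $\|i\circ f - g\circ \phi\|_{k} \le \delta$. Because each $g \oplus i\colon V\to Z$ is completely contractive, these seminorms are dominated by the norms on $V$; the Ruan axioms pass to suprema, so quotienting $V$ by the null space produces an operator space $\widehat{Y}$. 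Since each $Z$ in the supremum satisfies $\|\cdot\|_{cb} = \|\cdot\|_{q}$, the same identity holds in $\widehat{Y}$, making $\widehat{Y}$ an $M_{q}$-space; and if $\widehat{X}$ and $Y$ are finite-dimensional, so is $V$, hence so is $\widehat{Y}$.

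Define $\widehat{f}(\widehat{x}) = [(\widehat{x},0)]$ and $j(y) = [(0,y)]$; both are complete contractions by construction. The factorization property is then tautological: any admissible triple $(Z,g,i)$ yields a complete contraction $g \oplus i : V \to Z$ that factors through the quotient map as $\tau \circ \pi$ with $\tau$ completely contractive, and uniqueness of $\tau$ is forced by $\widehat{f}(\widehat{X}) + j(Y) = \widehat{Y}$. The discrepancy bound is equally direct: for $x \in M_{k}(X)$,
$$\|(\widehat{f}\phi - jf)^{(k)}(x)\|_{M_{k}(\widehat{Y})} = \sup_{(Z,g,i)} \|(g\phi - if)^{(k)}(x)\|_{M_{k}(Z)} \le \delta\, \|x\|_{M_{k}(X)}.$$

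The main geometric content, and the step I expect to be the most delicate, is producing enough admissible triples to force $j$ to be a complete isometry (and, in the symmetric situation, $\widehat{f}$ to be a complete isometry). Given $y \in M_{n}(Y)$, I embed $Y$ completely isometrically into an injective $M_{q}$-space $Z$ (for instance $C(K,M_{q})$ for an appropriate compact Hausdorff $K$) and take $i\colon Y \hookrightarrow Z$ to be the inclusion. The rescaled map $\frac{1}{1+\delta}\, i\circ f\circ \phi^{-1} : \phi(X) \to Z$ has cb-norm at most $1$, so injectivity of $Z$ extends it to a complete contraction $g : \widehat{X} \to Z$. A direct computation gives $if - g\phi = \frac{\delta}{1+\delta}\, if$, so $\|if - g\phi\|_{k} \le \frac{\delta}{1+\delta}\|f\|_{k} \le \delta$ and the triple $(Z,g,i)$ is admissible. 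Consequently $\|j^{(n)}(y)\|_{M_{n}(\widehat{Y})} \ge \|i^{(n)}(y)\|_{M_{n}(Z)} = \|y\|_{M_{n}(Y)}$, and $j$ is a complete isometry. The symmetric statement for $\widehat{f}$ follows by interchanging the roles of $(\phi,\widehat{X})$ and $(f,Y)$. The rescaling by $1/(1+\delta)$, combined with injectivity of $C(K,M_{q})$, is the crucial mechanism turning the cb-bound on $f \circ \phi^{-1}$ into an isometric embedding of $Y$ into $\widehat{Y}$.
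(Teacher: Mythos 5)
Your construction is correct and is essentially the paper's: the paper builds the same amalgam $\widehat{X}\oplus Y$, normed by the family of pairs $(\theta_X,\theta_Y)$ of complete contractions into $M_q$ satisfying $\left\Vert \theta_X\circ\phi-\theta_Y\circ f\right\Vert_k\leq\delta$, and proves that $j$ is completely isometric by the same mechanism you identify, namely extending the rescaled map $\frac{1}{1+\delta}\,\eta\circ f\circ\phi^{-1}$ from $\phi[X]$ to $\widehat{X}$ by injectivity. The only cosmetic difference is that you take arbitrary $M_q$-spaces $Z$ as test objects where the paper uses $M_q$ itself (equivalent, since every $M_q$-space embeds into some $C(K,M_q)$ and is normed by point evaluations), which makes your universal property tautological where the paper verifies it by a short direct computation.
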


\begin{proof}
Let $\widehat{X}\oplus Y$ be the algebraic direct sum of $\widehat{X}$ and $%
Y $. Consider the collection $\mathcal{F}$ of linear maps from $\widehat{X}%
\oplus Y$ to $M_{q}$ of the form%
\begin{equation*}
\left( x,y\right) \mapsto \theta _{X}\left( x\right) +\theta _{Y}\left(
y\right)
\end{equation*}%
where $\theta _{X}:\widehat{X}\rightarrow M_{q}$ and $\theta
_{Y}:Y\rightarrow M_{q}$ are completely contractive maps such that $%
\left\Vert \theta _{X}\circ \phi -\theta _{Y}\circ f\right\Vert _{k}\leq
\delta $. Let $\widehat{Y}$ be the operator space obtained from the vector
space $\widehat{X}\oplus Y$ and the collection of linear maps $\mathcal{F}$
as in \cite[\S 1.2.16]{blecher_operator_2004}. Observe that $\widehat{Y}$ is
in fact an $M_{q}$-space. Let $\widehat{f}:\widehat{X}\rightarrow \widehat{Y}
$ and $j:Y\rightarrow \widehat{Y}$ be the linear maps induced by the first
and second coordinate inclusions. It follows immediately from the
definitions that $\widehat{f}$ and $j$ are completely contractive maps such
that $\left\Vert \widehat{f}\circ \phi -j\circ f\right\Vert _{k}\leq \delta $%
. It remains to show that $j$ is a complete isometry. Suppose that $y\in
M_{q}\left( Y\right) $ and $\eta :Y\rightarrow M_{q}$ is a complete
contraction such that $\left\Vert y\right\Vert =\left\Vert \eta ^{\left(
q\right) }\left( y\right) \right\Vert $. Then $\frac{1}{1+\delta }\left(
\eta \circ f\circ \phi ^{-1}\right) :\phi \left[ X\right] \rightarrow M_{q}$
is a complete contraction and hence it extends to a $\ $complete contraction 
$\psi :\widehat{X}\rightarrow M_{q}$ by injectivity of $M_{q}$ in the
category of $M_{q}$-spaces and complete contractions; see \cite[Proposition
I.1.16]{lehner_mn-espaces_1997}. Observe that $\left\Vert \psi \circ \phi
-\eta \circ f\right\Vert _{q}\leq \delta $. Therefore we have that%
\begin{equation*}
\left\Vert j^{\left( q\right) }\left( y\right) \right\Vert \geq \left\Vert
\eta ^{\left( q\right) }\left( y\right) \right\Vert \geq \left\Vert
y\right\Vert \text{.}
\end{equation*}%
This concludes the proof that $j$ is a complete isometry. The proof that $%
\widehat{f}$ is a complete isometry under the assumption that $f$ is
injective and $\left\Vert \phi \circ f^{-1}\right\Vert _{cb}\leq 1+\delta $
is entirely analogous. Suppose now that $Z$ is an $M_{q}$-space, and $g:%
\widehat{X}\rightarrow Z$ and $i:Y\rightarrow Z$ are completely contractive
maps such that $\left\Vert i\circ f-g\circ \phi \right\Vert _{k}\leq \delta $%
. Then setting%
\begin{equation*}
\tau \left( \left( x,y\right) +N\right) =g\left( x\right) +i\left( y\right)
\end{equation*}%
gives a well defined complete contraction $\tau :\widehat{Y}\rightarrow Z$.
It is not difficult to verify that $\tau $ satisfies the desired conclusions.
\end{proof}

A minor modification of the above argument provides the following
straightforward generalization.

\begin{lemma}
\label{Lemma:multiple-pushout-sp}Suppose that $k,N\in \mathbb{N}$, $X_{n},%
\widehat{X}_{n},Y$ for $n\leq N$ are finite-dimensional $M_{q}$-spaces, $%
k\leq q$, and $\delta _{n}\geq 0$. Suppose that $\phi _{n}:X_{n}\rightarrow 
\widehat{X}_{n}$ and $f_{n}:X_{n}\rightarrow Y$ are linear maps such that 
\begin{equation*}
\max \left\{ \left\Vert f_{n}\circ \phi _{n}^{-1}\right\Vert _{k},\left\Vert
\phi _{n}\circ f_{n}^{-1}\right\Vert _{k}\right\} \leq 1+\delta _{n}
\end{equation*}%
for every $n\leq N$. There exists a finite-dimensional $M_{q}$-space $%
\widehat{Y}$, a complete isometry $j:Y\rightarrow \widehat{Y}$ and complete
isometries $\widehat{f}_{n}:\widehat{X}_{n}\rightarrow Y$ such that $%
\left\Vert \widehat{f}_{n}\circ \phi _{n}-j\circ f_{n}\right\Vert _{k}\leq
\delta _{n}$ for every $n\leq N$. Moreover if $Z$ is an $M_{q}$-space, $%
g_{n}:\widehat{X}_{n}\rightarrow Z$ and $i:Y\rightarrow Z$ are completely
contractive maps such that $\left\Vert g_{n}\circ \phi _{n}-i\circ
f_{n}\right\Vert _{k}\leq \delta _{n}$, then there exists a unique linear
map $\tau :\widehat{Y}\rightarrow Z$ such that $\tau \circ j=i$ and $\tau
\circ \widehat{f}_{n}=g_{n}$. Furthermore $\tau $ is completely contractive.
\end{lemma}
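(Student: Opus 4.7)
The plan is to follow the construction in the proof of Lemma \ref{Lemma:pushout-sp}, generalized from a single pushout to $N$ simultaneous ones. I will build $\widehat{Y}$ as the operator space on the algebraic direct sum $V=\widehat{X}_{1}\oplus \cdots \oplus \widehat{X}_{N}\oplus Y$ whose matrix norms are induced, via the construction of \cite[\S 1.2.16]{blecher_operator_2004}, by a family $\mathcal{F}$ of compatible linear functionals into $M_{q}$. Concretely, $\mathcal{F}$ consists of all maps of the form
\[
(x_{1},\ldots ,x_{N},y)\mapsto \sum_{n=1}^{N}\theta _{n}(x_{n})+\theta_{Y}(y),
\]
where each $\theta _{n}:\widehat{X}_{n}\rightarrow M_{q}$ and $\theta_{Y}:Y\rightarrow M_{q}$ is a complete contraction satisfying $\|\theta _{n}\circ \phi _{n}-\theta _{Y}\circ f_{n}\|_{k}\leq \delta _{n}$ for every $n\leq N$. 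Since every element of $\mathcal{F}$ takes values in $M_{q}$, the resulting space $\widehat{Y}$ is automatically a (finite-dimensional) $M_{q}$-space.

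Next I would define $j:Y\rightarrow \widehat{Y}$ and $\widehat{f}_{n}:\widehat{X}_{n}\rightarrow \widehat{Y}$ as the coordinate inclusions. These are completely contractive because, for every $\theta \in \mathcal{F}$, the compositions $\theta \circ j=\theta _{Y}$ and $\theta \circ \widehat{f}_{n}=\theta _{n}$ are complete contractions by construction. The approximate compatibility $\|\widehat{f}_{n}\circ \phi _{n}-j\circ f_{n}\|_{k}\leq \delta _{n}$ then follows from the identity $\theta \circ (\widehat{f}_{n}\circ \phi _{n}-j\circ f_{n})=\theta _{n}\circ \phi _{n}-\theta _{Y}\circ f_{n}$ together with the defining bound of $\mathcal{F}$.

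The crux is verifying that $j$ and each $\widehat{f}_{n}$ are complete isometries; I would repeat the argument of Lemma \ref{Lemma:pushout-sp}, producing the appropriate element of $\mathcal{F}$ one coordinate at a time. For $j$, given $y\in M_{q}(Y)$ and a complete contraction $\eta :Y\rightarrow M_{q}$ with $\|\eta ^{(q)}(y)\|=\|y\|$, extend for each $n$ the map $\frac{1}{1+\delta _{n}}(\eta \circ f_{n}\circ \phi _{n}^{-1}):\phi _{n}[X_{n}]\rightarrow M_{q}$ to a complete contraction $\psi _{n}:\widehat{X}_{n}\rightarrow M_{q}$ using injectivity of $M_{q}$ in the category of $M_{q}$-spaces \cite[Proposition I.1.16]{lehner_mn-espaces_1997}, and verify exactly as in Lemma \ref{Lemma:pushout-sp} that $\|\psi _{n}\circ \phi _{n}-\eta \circ f_{n}\|_{k}\leq \delta _{n}$. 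The tuple $(\psi _{1},\ldots ,\psi _{N},\eta )$ then determines an element of $\mathcal{F}$ realizing the norm of $y$, so $\|j^{(q)}(y)\|\geq \|y\|$. The proof that each $\widehat{f}_{n}$ is a complete isometry is symmetric, using the hypothesis $\|\phi _{n}\circ f_{n}^{-1}\|_{k}\leq 1+\delta _{n}$.

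The universal property is then immediate from the construction: given an $M_{q}$-space $Z$ with completely contractive maps $g_{n}:\widehat{X}_{n}\rightarrow Z$ and $i:Y\rightarrow Z$ satisfying $\|g_{n}\circ \phi _{n}-i\circ f_{n}\|_{k}\leq \delta _{n}$, the linear map $\tau (x_{1},\ldots ,x_{N},y)=\sum _{n}g_{n}(x_{n})+i(y)$ is the unique linear map with $\tau \circ j=i$ and $\tau \circ \widehat{f}_{n}=g_{n}$, and is completely contractive because composing with any complete contraction $\alpha :Z\rightarrow M_{q}$ produces an element of $\mathcal{F}$. The only nonroutine point is the parallel extension step: one must produce $N$ separate extensions $\psi _{n}$ and confirm that they together comprise a single member of $\mathcal{F}$. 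Since the constraints defining $\mathcal{F}$ decouple across $n$, this reduces to $N$ independent applications of the single-coordinate calculation in Lemma \ref{Lemma:pushout-sp}, which is why the author advertises the result as a ``minor modification.''
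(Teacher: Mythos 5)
Your proposal is correct and is precisely the ``minor modification'' of the proof of Lemma \ref{Lemma:pushout-sp} that the paper intends: the same direct-sum construction with a family $\mathcal{F}$ of $M_q$-valued functionals whose compatibility constraints decouple across the indices $n$, the same injectivity-of-$M_q$ extension argument applied one coordinate at a time, and the same verification of the universal property. The one caveat---inherited from the paper's own statement rather than introduced by you---is that your extension step actually uses $\left\Vert f_{n}\circ \phi _{n}^{-1}\right\Vert _{cb}\leq 1+\delta _{n}$ (the hypothesis of Lemma \ref{Lemma:pushout-sp}) rather than only the $\left\Vert \cdot \right\Vert _{k}$ bound literally stated here; this is harmless where the lemma is applied in the paper, since the cb-norm bounds hold there.
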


We will refer to the space $\widehat{Y}$ constructed in Lemma \ref%
{Lemma:multiple-pushout-sp} as the $k$\emph{-amalgamated coproduct} over $Y$
of the maps $f_{n}:X_{n}\rightarrow Y_{n}$ and $\phi _{n}:X_{n}\rightarrow Y$
\emph{with tolerance }$\delta _{n}$ for $n\leq N$.

\subsection{A characterization of nuclearity\label{Sub:nuc}}

In the proof of the main theorem we will need the following characterization
of nuclearity for operator spaces.

\begin{lemma}
\label{Lemma:ac-sp}Suppose that $Z$ is a $1$-exact operator space. The
following statements are equivalent:

\begin{enumerate}
\item For every finite-dimensional operator spaces $E,F$, $\delta >0$,
linear maps $\phi :E\rightarrow F$ and $f:E\rightarrow Z$ such that $\phi $
is injective and $\left\Vert f\circ \phi ^{-1}\right\Vert _{cb}<1+\delta $,
there exists a completely contractive map $g:F\rightarrow Z$ such that $%
\left\Vert g_{|E}-f\right\Vert <\delta $;

\item For every $q\in \mathbb{N}$, subspace $E\subset M_{q}$, completely
contractive linear map $f:E\rightarrow Z$, and $\varepsilon >0$, there
exists a completely contractive map $g:M_{q}\rightarrow Z$ such that $%
\left\Vert g\circ \phi -f\right\Vert <\varepsilon $;

\item $Z$ is nuclear.
\end{enumerate}
\end{lemma}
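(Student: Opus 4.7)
The plan is to prove the equivalences in a cycle $(1)\Rightarrow(2)\Rightarrow(3)\Rightarrow(1)$. The first implication is essentially immediate: in $(1)$ take $\phi$ to be the inclusion $E\hookrightarrow M_{q}$, so that $f\circ\phi^{-1}=f$ is automatically completely contractive, and apply $(1)$ with $\delta=\varepsilon$.

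For $(2)\Rightarrow(3)$ I would exploit the fact that $Z$ is $1$-exact. Given a finite subset $A\subset Z$ and $\varepsilon>0$, set $F_{0}:=\mathrm{span}(A)$ and, by $1$-exactness of $Z$, pick a completely contractive injection $\varphi_{0}:F_{0}\to M_{n}$ with $\left\Vert \varphi_{0}^{-1}\right\Vert _{cb}\leq 1+\eta$ for a small $\eta>0$ to be tuned. Let $E:=\varphi_{0}(F_{0})\subset M_{n}$ and $f:=(1+\eta)^{-1}\varphi_{0}^{-1}:E\to Z$, which is completely contractive. By $(2)$ there is a completely contractive $g:M_{n}\to Z$ with $\left\Vert g|_{E}-f\right\Vert <\varepsilon_{0}$. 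Extend $\varphi_{0}$ to a complete contraction $\varphi:Z\to M_{n}$ by injectivity of $M_{n}$ in the operator-space category (Arveson--Wittstock), and set $\psi:=g$. A short computation then shows $\left\Vert \psi(\varphi(x))-x\right\Vert \leq(\eta+\varepsilon_{0})\left\Vert x\right\Vert $ for $x\in F_{0}$, so choosing $\eta$ and $\varepsilon_{0}$ small enough yields the nuclear approximation on $A$.

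For $(3)\Rightarrow(1)$, assume $Z$ is nuclear. Replacing $E$ with $\phi(E)\subset F$, we may assume $\phi$ is the inclusion of a subspace, so that $\left\Vert f\right\Vert _{cb}=1+\delta_{0}$ with $\delta_{0}<\delta$. Apply nuclearity to a sufficiently dense net in the unit ball of $f(E)$ to produce complete contractions $\varphi:Z\to M_{n}$ and $\psi:M_{n}\to Z$ such that $\left\Vert \psi(\varphi(y))-y\right\Vert \leq\varepsilon_{0}\left\Vert y\right\Vert $ for all $y\in f(E)$ (possible because $f(E)$ is finite-dimensional). The composition $\varphi\circ f:E\to M_{n}$ has cb-norm at most $1+\delta_{0}$; extend it to $H:F\to M_{n}$ with $\left\Vert H\right\Vert _{cb}\leq 1+\delta_{0}$ by injectivity of $M_{n}$. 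Set $g:=(1+\delta_{0})^{-1}(\psi\circ H):F\to Z$, which is completely contractive. A direct estimate yields $\left\Vert g|_{E}-f\right\Vert \leq\varepsilon_{0}+\delta_{0}$, which is $<\delta$ as soon as $\varepsilon_{0}<\delta-\delta_{0}$.

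The main obstacle I anticipate is the constant management in $(3)\Rightarrow(1)$: extending $\varphi\circ f$ to $F$ inflates the cb-norm from (at most) $1$ to $1+\delta_{0}$, forcing a rescaling by $(1+\delta_{0})^{-1}$ that introduces an extra error of order $\delta_{0}$; the argument succeeds only because the strict hypothesis $\left\Vert f\circ\phi^{-1}\right\Vert _{cb}<1+\delta$ leaves the buffer $\delta-\delta_{0}>0$ needed to absorb this cost. The calibration in $(2)\Rightarrow(3)$ between $\eta$, $\varepsilon_{0}$, and $\varepsilon$ is analogous but more forgiving, and the key non-obvious ingredient used throughout is the injectivity of $M_{n}$ in the operator-space category, which is what permits the Wittstock-type extensions in both directions.
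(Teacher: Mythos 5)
Your proof is correct and follows essentially the same route as the paper's: $(1)\Rightarrow(2)$ by specializing $\phi$ to the inclusion, $(2)\Rightarrow(3)$ via a $1$-exact embedding of $\mathrm{span}(A)$ into $M_{n}$ together with an Arveson--Wittstock extension, and $(3)\Rightarrow(1)$ via nuclearity plus injectivity of $M_{n}$ with the same rescaling bookkeeping. If anything, your write-up is slightly more explicit than the paper's sketch about extending $\varphi_{0}$ to all of $Z$ and about passing from a finite subset to the finite-dimensional space $f(E)$.
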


\begin{proof}
The argument is straightfoward. For convenience of the reader, we sketch
below the proofs of the nontrivial implications.

\begin{description}
\item[(2)$\Rightarrow $(3)] Suppose that $\overline{a}$ is a tuple in $Z$
and $\varepsilon >0$. Fix $\delta >0$ small enough. Pick $q\in \mathbb{N}$,
a subspace $E$ of $M_{q}$ and a completely contractive linear map $%
f:E\rightarrow Z$ with range $\mathrm{span}\left( \overline{a}\right) $ such
that $\left\Vert f^{-1}\right\Vert _{cb}<1+\delta $. By assumption there
exits a completely contractive map $\rho :M_{q}\rightarrow Z$ such that $%
\left\Vert \rho -f\right\Vert _{cb}<\delta $. Observe now that $\gamma :=%
\frac{1}{1+\delta }f^{-1}:\mathrm{span}\left( \overline{a}\right)
\rightarrow M_{q}$ is completely contractive and $\left\Vert \rho \circ
\gamma \left( a_{i}\right) -a_{i}\right\Vert <\varepsilon $ for $\delta $
small enough.

\item[(3)$\Rightarrow $(1)] Suppose that $E,F$ are finite-dimensional $1$%
-exact operator spaces, $\delta >0$, and $n\in \mathbb{N}$. Fix linear maps $%
\phi :E\rightarrow F$ and $f:E\rightarrow Z$ such that $\phi $ is injective.
Set $\left\Vert f\circ \phi ^{-1}\right\Vert _{cb}=1+\eta $ and let $%
\varepsilon $ be a strictly positive real number. Since $Z$ is nuclear there
exist $q\in \mathbb{N}$ and completely contractive maps $\rho :Z\rightarrow
M_{q}$ and $\gamma :M_{q}\rightarrow Z$ such that $\left\Vert \gamma \circ
\rho \circ f-f\right\Vert <\varepsilon $. By injectivity of $M_{k}$ there
exists a completely contractive map $h:F\rightarrow M_{k}$ that extends $%
\frac{1}{1+\eta }\left( \rho \circ f\circ \phi ^{-1}\right) $. Let then $g$
be $\gamma \circ h$ and observe that $\left\Vert g\circ \phi -f\right\Vert
<\eta +\varepsilon $.
\end{description}
\end{proof}

\subsection{The nuclear \texorpdfstring{$\mathbb{NG}$}{NG}-embedding theorem \label{Sub:emb-sp}}

The main result of this section is Theorem \ref{Theorem:embedding-sp},
characterizing separable nuclear operator spaces as the ranges of completely
contractive projections of the Gurarij operator space. The proof is inspired
by a characterization of retracts of certain Fra\"{\i}ss\'{e} limits due to\
Dolinka; see \cite[Theorem 3.2]{dolinka_characterization_2012}. Dolinka's
result has later been extended by Kubi\'{s} in \cite{kubis_injective_?} to
more general Fra\"{\i}ss\'{e} limits. The forward implication in Theorem \ref%
{Theorem:embedding-sp} has already been obtained by Oikhberg \cite[Theorem
1.1]{oikhberg_non-commutative_2006} under the stronger assumption that the
operator space $X$ is an $\mathcal{OL}_{\infty ,1+}$ space as defined in 
\cite{effros_scr_1998}.

Fix a sequence $\left( \varepsilon _{n}\right) $ of strictly positive real
numbers such that $\varepsilon _{n}\leq 2^{-2n}$ for every $n\in \mathbb{N}$%
. We say that a subset $D$ of a metric space $X$ is $\varepsilon $-dense for
some $\varepsilon >0$ if every element of $X$ is at distance at most $%
\varepsilon $ from some element of $D$. For $m\in \mathbb{N}$ let $D_{m}$ be
a finite $\varepsilon _{m}$-dense subset of the unit ball of $M_{m}$ and let 
$\left( \overline{a}_{m,i}\right) $ be an enumeration of the finite tuples
from $D_{m}$. Set $E_{m,i}=\left\langle \overline{a}_{m,i}\right\rangle
\subset M_{m}$ for $i,m\in \mathbb{N}$.

Assume that $\left( Z_{n}\right) $ is a direct system of finite-dimensional $%
1$-exact operator spaces with completely isometric connective maps $%
j_{n}:Z_{n}\rightarrow Z_{n+1}$. Suppose that $D_{n}^{Z}\subset \mathrm{Ball}%
\left( Z_{n}\right) $ is an $\varepsilon _{n}$-dense finite subset of $Z_{n}$%
. Define $Z$ to be the corresponding direct limit, and identify $Z_{n}$ with
its image inside $Z$.

\begin{lemma}
\label{Lemma:recognize-NG}Suppose that for any $i,m\in \mathbb{N}$ with $%
i,m\leq n$ and complete contraction $f:E_{m,i}\rightarrow Z_{n}$ such that $%
f\left( \overline{a}_{m,i}\right) \subset D_{n}^{Z}$ there exists a complete
isometry $\widehat{f}:M_{m}\rightarrow Z_{n+1}$ such that $\left\Vert 
\widehat{f}_{|E}-j_{n}\circ f\right\Vert \leq \left\Vert f^{-1}\right\Vert
_{cb}-1+\varepsilon _{n}$. Then $Z$ is completely isometric to $\mathbb{NG}$.
\end{lemma}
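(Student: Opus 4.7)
The plan is to verify that $Z$ satisfies the three defining features of $\mathbb{NG}$: separability, $1$-exactness, and approximate injectivity among finite-dimensional $1$-exact operator spaces. Separability is immediate, since $Z$ is the closed union of the countable chain of finite-dimensional spaces $Z_n$. For $1$-exactness, any finite-dimensional subspace of $Z$ lies, up to arbitrarily small cb-perturbation, inside some $Z_n$, and each $Z_n$ is $1$-exact by hypothesis; a standard perturbation argument transfers $1$-exactness to $Z$.

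The substantive step is approximate injectivity. Fix finite-dimensional $1$-exact operator spaces $E \subset F$, a complete isometry $\phi : E \to Z$, and $\varepsilon > 0$. Choose a small $\delta > 0$. By $1$-exactness of $F$, fix $m \in \mathbb{N}$ and a complete contraction $\beta : F \to M_m$ with $\lVert \beta^{-1} \rVert_{cb} \leq 1 + \delta$, and pick an Auerbach basis $\bar{e} = (e_1, \ldots, e_d)$ of $E$. Using the $\varepsilon_m$-density of $D_m$ in the unit ball of $M_m$, select $\bar{a}_{m,i} \in D_m^d$ with entries close to $\beta(\bar{e})$; for $\varepsilon_m$ small the tuple is linearly independent, spans a subspace $E_{m,i} \subseteq M_m$ cb-close to $\beta(E)$, and yields a linear identification $\sigma : E \to E_{m,i}$ cb-close to $\beta_{|E}$. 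Similarly, for $n$ large and using the $\varepsilon_n$-density of $D_n^Z$ together with density of $\bigcup_k Z_k$ in $Z$, pick $z_k \in D_n^Z$ close to $\phi(e_k)$, defining a map $\tau : E \to Z_n$ cb-close to $\phi$. A scaling-and-perturbation step, in the spirit of Lemma~\ref{Lemma:pushout-sp}, then turns $\tau \circ \sigma^{-1}$ into a genuinely completely contractive map $f : E_{m,i} \to Z_n$ with $f(\bar{a}_{m,i}) \subset D_n^Z$ and $\lVert f^{-1} \rVert_{cb}$ still close to $1$, where $n$ is chosen large enough that $i, m \leq n$.

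The hypothesis of the lemma now provides a complete isometry $\widehat{f} : M_m \to Z_{n+1}$ with $\lVert \widehat{f}_{|E_{m,i}} - j_n \circ f \rVert \leq \lVert f^{-1} \rVert_{cb} - 1 + \varepsilon_n$, and composing yields $\psi := \widehat{f} \circ \beta : F \to Z$. By construction $\lVert \psi \rVert_{cb} \leq 1$, $\lVert \psi^{-1} \rVert_{cb} \leq \lVert \beta^{-1} \rVert_{cb} \leq 1 + \delta$, and, by tracing through the perturbations, $\lVert \psi_{|E} - \phi \rVert$ is dominated by a combination of $\delta$, $\varepsilon_m$, $\varepsilon_n$, and the chosen approximation errors on the $z_k$. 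Choosing all parameters sufficiently small makes both error bounds smaller than $\varepsilon$. This verifies a mixed form of the approximate injectivity property (near-complete-isometry extension nearly agreeing with $\phi$ on $E$), which by \cite[Proposition~4.8]{lupini_uniqueness_2014} is equivalent to the defining characterization of $\mathbb{NG}$; the uniqueness of $\mathbb{NG}$ proved there then identifies $Z$ with $\mathbb{NG}$ completely isometrically. The main obstacle is the bookkeeping in the second paragraph: arranging $f$ to be exactly completely contractive, landing exactly in the prescribed finite dense set, and with controlled $\lVert f^{-1} \rVert_{cb}$, requires a careful interplay of rescalings and re-perturbations within the finite dense subsets $D_m \subset M_m$ and $D_n^Z \subset Z_n$.
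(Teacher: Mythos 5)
Your argument is correct and follows essentially the same route as the paper: both rest on the characterization of $\mathbb{NG}$ from \cite[Proposition 4.8]{lupini_uniqueness_2014} together with a small-perturbation argument that lands the data in the prescribed finite dense sets $D_m$ and $D_n^Z$ so that the hypothesis of the lemma applies. The only difference is that the paper uses the form of that proposition which requires checking the extension property only for pairs $E\subset M_m$ with an exactly completely isometric extension, which spares it your preliminary factorization of an arbitrary $1$-exact $F$ through $M_m$.
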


\begin{proof}
Observe that $Z$ is separable and $1$-exact. By \cite[Proposition 4.8]%
{lupini_uniqueness_2014} it is enough to show that if $E\subset M_{m}$, $%
g:E\rightarrow Z$ is a complete isometry, and $\delta >0$, then there exists
a complete isometry $\widehat{g}:M_{m}\rightarrow Z$ such that $\left\Vert 
\widehat{g}_{|E}-g\right\Vert \leq \delta $. The \textquotedblleft small
perturbation argument\textquotedblright ---see \cite[Lemma 12.3.15]%
{brown_c*-algebras_2008}---shows that, for such a $g$ and $\delta >0$, there
exist $n\geq i,m$ and a complete contraction $f:E_{m,i}\rightarrow Z_{n}$
such that $\varepsilon _{n}\leq \delta $, $f\left( \overline{a}_{m,i}\right)
\subset D_{n}^{Z}$, $\left\Vert \widehat{f}^{-1}\right\Vert _{cb}\leq
1+\delta $, and $\left\Vert f-g\right\Vert \leq \delta $. The conclusion
then follows easily by applying the hypothesis to $f$.
\end{proof}

\begin{theorem}
\label{Theorem:embedding-sp}Suppose that $X$ is a separable $1$-exact
operator space. Then $X$ is nuclear if and only if there exist a complete
isometry $\phi :X\rightarrow \mathbb{NG}$ and a completely contractive
projection of $\mathbb{NG}$ onto the range of $\phi $.
\end{theorem}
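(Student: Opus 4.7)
The plan is to handle the two directions separately.

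For the ($\Leftarrow$) direction, given a completely contractive projection $P:\mathbb{NG}\to X$, I would verify criterion (1) of Lemma \ref{Lemma:ac-sp} for $X$. Given finite-dimensional $1$-exact $E\subset F$, an injection $\phi:E\to F$, and a completely contractive $f:E\to X$ with $\|f\circ\phi^{-1}\|_{cb}<1+\delta$, the goal is to produce a completely contractive extension $\tilde g:F\to\mathbb{NG}$ of $\iota\circ f$ (where $\iota:X\hookrightarrow\mathbb{NG}$) and then set $g:=P\circ\tilde g:F\to X$. To construct $\tilde g$, I would use that $\mathbb{NG}$ is a direct limit of finite-dimensional $M_q$-spaces, so up to a small perturbation $\iota f(E)$ sits inside such a subspace $W$; form the pushout of $\phi:E\to F$ and $E\to W$ via Lemma \ref{Lemma:pushout-sp} to obtain a finite-dimensional $M_q$-space $G$ containing $W$ completely isometrically and admitting a completely contractive map from $F$; then apply the approximate injectivity of $\mathbb{NG}$ to the complete isometry $W\hookrightarrow\mathbb{NG}$ to extend it to a cb-isomorphism $G\to\mathbb{NG}$ of low distortion, rescale it to a complete contraction, restrict to $F$, and compose with $P$.

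For the ($\Rightarrow$) direction, I would build $\mathbb{NG}$ around $X$ together with the retraction, following the template for retracts of Fra\"{\i}ss\'{e} limits recalled in the text. Writing $X=\overline{\bigcup_n X^{(n)}}$ with $X^{(n)}$ finite-dimensional, I would inductively construct finite-dimensional $M_{q_n}$-spaces $Z_n\subset Z_{n+1}$ (with completely isometric inclusions) containing $X^{(n)}$, completely contractive projections $P_n:Z_n\to X$ with $P_n|_{X^{(n)}}=\mathrm{id}$ and $P_{n+1}|_{Z_n}=P_n$, and finite $\varepsilon_n$-dense subsets $D_n^Z\subset\mathrm{Ball}(Z_n)$. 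The step $Z_n\to Z_{n+1}$ has two parts. First, amalgamate $Z_n$ and $X^{(n+1)}$ over $X^{(n)}$ via Lemma \ref{Lemma:pushout-sp} to obtain $Z_n^{\sharp}$, and use the universal property applied to $(P_n,\,X^{(n+1)}\hookrightarrow X)$ to get a completely contractive $P_n^{\sharp}:Z_n^{\sharp}\to X$ extending both. Second, for every injective completely contractive $f:E_{m,i}\to Z_n^{\sharp}$ with $m,i\leq n$ and $f(\overline a_{m,i})\subset D_n^Z$, invoke the nuclearity of $X$ via Lemma \ref{Lemma:ac-sp}(2) to choose a completely contractive $\psi_f:M_m\to X$ with $\|\psi_f|_{E_{m,i}}-P_n^{\sharp}\circ f\|_{cb}<\varepsilon_n$; then apply Lemma \ref{Lemma:multiple-pushout-sp} with tolerances $\delta_f:=\|f^{-1}\|_{cb}-1+\varepsilon_n$ to form $Z_{n+1}\supset Z_n^{\sharp}$ with completely isometric extensions $\widehat f:M_m\to Z_{n+1}$. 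The universal property of the multi-amalgamated coproduct, applied to $(\psi_f)$ and $P_n^{\sharp}$ whose discrepancy on $E_{m,i}$ is at most $\varepsilon_n\leq\delta_f$, yields a completely contractive $P_{n+1}:Z_{n+1}\to X$ that \emph{exactly} extends $P_n^{\sharp}$. A minor bookkeeping step adjoins free completely isometric copies of $M_m$ for $m\leq n$ to cover the vacuous case of non-injective $f$ in Lemma \ref{Lemma:recognize-NG}. In the limit, Lemma \ref{Lemma:recognize-NG} identifies $Z:=\overline{\bigcup Z_n}$ with $\mathbb{NG}$, $X=\overline{\bigcup X^{(n)}}$ embeds completely isometrically into $Z$, and the $P_n$ patch to the desired completely contractive projection $P:Z\to X$.

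The hardest part will be the forward construction, specifically engineering each $P_{n+1}$ to extend $P_n$ exactly---so that the limit projection is well-defined---while simultaneously doing enough Gurarij-like amalgamations for the limit to be $\mathbb{NG}$. This is achieved through the interplay between the universal property of Lemma \ref{Lemma:multiple-pushout-sp}, which converts $\varepsilon_n$-compatibility into an exact completely contractive extension, and the nuclearity hypothesis on $X$, which via Lemma \ref{Lemma:ac-sp}(2) supplies the maps $\psi_f:M_m\to X$ accurate within the prescribed tolerance. This is the only place where nuclearity of $X$ enters decisively.
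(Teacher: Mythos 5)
Your proposal follows essentially the same route as the paper in both directions: the backward implication via the approximate injectivity of $\mathbb{NG}$ together with Lemma \ref{Lemma:ac-sp} and stability of nuclearity under completely contractive projections, and the forward implication via a recursive construction of finite-dimensional $M_{q_n}$-spaces around $X$ using the amalgamated coproducts, their universal property fed by nuclearity, and Lemma \ref{Lemma:recognize-NG} to identify the limit with $\mathbb{NG}$.

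The one point where your version overreaches is the claim that $P_{n+1}$ can be arranged to extend $P_n$ \emph{exactly}, so that the retractions literally patch together. The universal property of Lemma \ref{Lemma:multiple-pushout-sp} requires the target $Z$ to be an $M_{q_{n+1}}$-space, and $X$ is only $1$-exact; the map $\tau$ it produces is therefore a complete contraction into $\mathrm{MIN}_{q_{n+1}}$ of a finite-dimensional piece of $X$, and passing back to the genuine operator space structure of $X$ costs a factor $\left\Vert id:\mathrm{MIN}_{q_{n+1}}(X^{(n+1)})\rightarrow X^{(n+1)}\right\Vert _{cb}\leq 1+\varepsilon _{n+1}$. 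After the forced rescaling by $\frac{1}{1+\varepsilon _{n+1}}$ one only gets $\left\Vert P_{n+1}|_{Z_{n}}-P_{n}\right\Vert \leq \varepsilon _{n+1}$ rather than equality. This is exactly how the paper proceeds (it only demands $\left\Vert \pi _{n+1}\circ j_{n}-i_{n}\circ \pi _{n}\right\Vert \leq \varepsilon _{n}$ and defines $\pi $ as a limit of an approximately intertwined sequence), so the defect is cosmetic and repaired by the standard summable-error limit argument; but as literally stated your patching step does not go through.
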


\begin{proof}
It follows easily from the characterizing property of $\mathbb{NG}$ and
Lemma \ref{Lemma:ac-sp} that $\mathbb{NG}$ is nuclear. Therefore the range
of a completely contractive projection of $\mathbb{NG}$ is nuclear as well.
Conversely, suppose that $X$ is a nuclear. Our goal is to construct an
operator space $Z$ completely isometric to $\mathbb{NG}$, a completely
isometric embedding $\eta :X\rightarrow Z$, and a completely contractive map 
$\pi :Z\rightarrow X$ such that $\pi \circ \eta =id_{X}$. We will define by
recursion on $n$

\begin{itemize}
\item an increasing sequence $\left( q_{n}\right) $ in $\mathbb{N}$ such
that $q_{n}\geq n$,

\item an increasing sequence $\left( X_{n}\right) $ of finite-dimensional
subspaces $X$ with inclusion maps $i_{n}:X_{n}\rightarrow X_{n+1}$ such that 
$\bigcup_{n}X_{n}$ is dense in $X$ and 
\begin{equation*}
\left\Vert id_{X_{n}}:\mathrm{MIN}_{q_{n}}\left( X_{n}\right) \rightarrow
X_{n}\right\Vert _{cb}\leq 1+\varepsilon _{n}\text{,}
\end{equation*}

\item an inductive sequence $\left( Z_{n}\right) $ of finite-dimensional $1$%
-exact operator spaces with completely isometric connective maps $%
j_{n}:Z_{n}\rightarrow Z_{n+1}$ such that $Z_{n}$ is an $M_{q_{n}}$-space,

\item completely isometric maps $\eta _{n}:\mathrm{MIN}_{q_{n}}\left(
X_{n}\right) \rightarrow Z_{n}$ such that%
\begin{equation*}
\left\Vert j_{n}\circ \eta _{n}-\eta _{n+1}\circ i_{n}\right\Vert \leq
\varepsilon _{n}\text{,}
\end{equation*}

\item complete contractions $\pi _{n}:Z_{n}\rightarrow X_{n}$ such that 
\begin{equation*}
\left\Vert \pi _{n+1}\circ j_{n}-i_{n}\circ \pi _{n}\right\Vert \leq
\varepsilon _{n}\text{ and }\left\Vert \pi _{n}\circ \eta
_{n}-id_{X_{n}}\right\Vert \leq \varepsilon _{n}\text{,}
\end{equation*}

\item finite $\varepsilon _{n}$-dense subset $D_{n}^{X}$ of $\mathrm{Ball}%
(X_{n})$ and $D_{n}^{Z}$ of $\mathrm{Ball}\left( Z_{n}\right) $ such that $%
i_{n}\left[ D_{n}^{X}\right] \subset D_{n+1}^{X}$, $j_{n}\left[ D_{n}^{Z}%
\right] \subset D_{n+1}^{Z}$, and $\pi _{n}\left[ D_{n}^{Z}\right] \subset
D_{n}^{X}$,
\end{itemize}

such that

\begin{enumerate}
\item For every $i,m\leq n$ and $f:E_{m,i}\rightarrow X_{n}$ complete
contraction such that $f\left( \overline{a}_{m,i}\right) \subset D_{n}^{X}$
there exists a complete contraction $\widehat{f}:M_{m}\rightarrow X_{n+1}$
such that $\left\Vert \widehat{f}_{|E}-i_{n}\circ f\right\Vert \leq
\varepsilon _{n}$;

\item For every $i,m\leq n$ and $f:E_{m,i}\rightarrow Z_{n}$ complete
contraction such that $f\left( \overline{a}_{m,i}\right) \subset D_{n}^{Z}$
there exists a complete isometry $\widehat{f}:M_{m}\rightarrow Z_{n+1}$ such
that $\left\Vert \widehat{f}_{|E}-j_{n}\circ f\right\Vert \leq \left\Vert
f^{-1}\right\Vert _{cb}-1+\varepsilon _{n}$.
\end{enumerate}

Fix a dense sequence $\left( w_{n}\right) $ in $X$. For $k=1$, let $X_{1}=%
\mathrm{span}\left\{ w_{1}\right\} $, $q_{1}$ large enough such that $%
\left\Vert id_{X_{1}}:\mathrm{MIN}_{q_{1}}\left( X_{1}\right) \rightarrow
X_{1}\right\Vert _{cb}\leq 1+\varepsilon _{1}$, and define $Z_{1}=\mathrm{MIN%
}_{q_{1}}\left( X_{1}\right) $. Suppose that $X_{k},Z_{k},\pi _{k},\eta
_{k-1},j_{k-1}$ have been defined for $k\leq n$. Observe that Condition (1)
above concerns only finitely many functions $f$. Therefore one can find a
subspace $X_{n+1}$ of $X$ containing $X_{n}\cup \left\{ w_{n+1}\right\} $
and satisfying Condition (1) by applying finitely many times Proposition \ref%
{Lemma:ac-sp} and the fact that $X$ is nuclear. Pick $q_{n+1}\geq \max
\left\{ n+1,q_{n}\right\} $ such that 
\begin{equation*}
\left\Vert id:\mathrm{MIN}_{q_{n+1}}\left( X_{n+1}\right) \rightarrow
X_{n+1}\right\Vert _{cb}\leq 1+\varepsilon _{n+1}\text{.}
\end{equation*}%
Let $Z_{n+1}$ be the $1$-amalgamated coproduct of $M_{q_{n+1}}$-spaces over $%
Z_{n}$ (see Lemma \ref{Lemma:multiple-pushout-sp}) of the maps $\eta _{n}:%
\mathrm{MIN}_{q_{n}}\left( X_{n}\right) \rightarrow Z_{n}$ and $i_{n}:%
\mathrm{\mathrm{MI}N}_{q_{n}}\left( X_{n}\right) \rightarrow \mathrm{MIN}%
_{q_{n+1}}\left( X_{n}\right) $ with tolerance $\varepsilon _{n}$ and of the
maps $f:E_{m,i}\rightarrow Z_{n}$ and $E_{m,i}\hookrightarrow M_{m}$
(inclusion map) with tolerance $\left\Vert f^{-1}\right\Vert
_{cb}-1+\varepsilon _{n}$, where $m,i$ range in $\left\{ 1,\ldots ,n\right\} 
$ and $f$ range among all the (finitely many) injective complete
contractions such that $f\left( \overline{a}_{m,i}\right) \subset D_{n}^{Z}$%
. Let $j_{n}:Z_{n}\rightarrow Z_{n+1}$ and $\eta _{n+1}:\mathrm{MIN}%
_{q_{n+1}}\left( X_{n+1}\right) \rightarrow Z_{n+1}$ be the canonical
complete isometries such that $\left\Vert j_{n}\circ \eta _{n}-\eta
_{n+1}\circ i_{n}\right\Vert \leq \varepsilon _{n}$. It remains to define
the complete contraction $\pi _{n+1}:Z_{n+1}\rightarrow X_{n+1}$. Observe
that $i_{n}\circ \pi _{n}:Z_{n}\rightarrow X_{n+1}$ is a complete
contraction such that 
\begin{equation*}
\left\Vert i_{n}\circ \pi _{n}\circ \eta _{n}-i_{n}\right\Vert \leq
\varepsilon _{n}\text{.}
\end{equation*}%
Furthermore for every $i,m\leq n$ and for any complete contraction $%
f:E_{m,i}\rightarrow Z_{n}$ such that $f\left( \overline{a}_{m,i}\right)
\subset D_{n}^{Z}$, $\pi _{n}\circ f:E_{m,i}\rightarrow X_{n}$ is a complete
contraction such that $\left( \pi _{n}\circ f\right) \left( \overline{a}%
_{m,i}\right) \subset D_{n}^{X}$. Therefore by Condition (1) there exists a
complete contraction $g:M_{m}\rightarrow X_{n+1}$ such that 
\begin{equation*}
\left\Vert g_{|E_{m,i}}-\left( i_{n}\circ \pi _{n}\right) \circ f\right\Vert
\leq \varepsilon _{n}\text{.}
\end{equation*}%
It follows from the universal property of the $1$-amalgamated coproduct that
there exists a complete contraction $\pi :Z_{n+1}\rightarrow \mathrm{MIN}%
_{q_{n+1}}\left( X_{n+1}\right) $ such that $\eta _{n+1}\circ \pi
=id_{X_{n+1}}$ and $\pi \circ j_{n}=i_{n}\circ \pi _{n}$. Define then $\pi
_{n+1}=\frac{1}{1+\varepsilon _{n+1}}\pi $. This concludes the recursive
construction. We now define $Z$ to be the direct limit $\lim_{\left( \phi
_{n}\right) }Z_{n}$, $\pi :=\lim_{n}\pi _{n}$, and $\eta :=\lim_{n}\eta _{n}$%
. It is immediate to verify that the conditions above ensure that $\pi $ and 
$\eta $ satisfy the desired requirements. Furthermore $Z$ is completely
isometric to $\mathbb{NG}$ by Lemma \ref{Lemma:recognize-NG}.
\end{proof}

\section{Nuclear operator systems \label{Section:systems}}

\subsection{Some background notions about operator systems\label%
{Sub:background-sy}}

An \emph{operator system} is a closed subspace $X$ of $B\left( H\right) $
that contains the identity operator (the \emph{unit}) and it is closed by
taking adjoints. The inclusion $X\subset B\left( H\right) $ and the
identification of $M_{n}\left( X\right) $ with a subspace of $B\left(
H^{\oplus n}\right) $ defines\emph{\ positive cones }on all the matrix
amplifications of $X$. These are induced by the notion of positivity in $%
B\left( H\right) $. (An operator $T\in B\left( H\right) $ is positive
provided that $\left\langle T\xi ,\xi \right\rangle \geq 0$ for any $\xi \in
H$.) The Choi-Effros characterization \cite[Theorem 4.4]%
{choi_injectivity_1977} provides an abstract characterization of operator
systems among the \emph{matricially ordered }vector spaces endowed with a
linear involution (the adjoint map) and a distinguished element (the unit).
An operator system is in particular an operator space, and an abstract
characterization of operator systems among operator spaces with a
distinguished element (the unit) has been given in \cite{blecher_metric_2011}%
.

A map $\varphi $ between operator systems is \emph{unital }if it maps the
unit to the unit, \emph{positive }if it maps positive elements to positive
elements, and \emph{completely positive} if all its amplifications are
positive. A fundamental fact about operator spaces is that the matricial
order structure determines the matricial norms, and vice versa. This implies
that a unital linear map between operator systems is completely positive if
and only if it is completely contractive. A unital completely positive
linear map will be in the following simply called a \emph{ucp map}. A
surjective ucp map with ucp inverse is called a \emph{complete order
isomorphism}. A \emph{complete order embedding }is a complete order
isomorphism onto its image. Observe that a complete order embedding is in
particular a complete isometry.

The minimal operator system structure $\mathrm{OM\mathrm{IN}}_{q}\left(
X\right) $ on an operator system $X$ has been introduced and studied in \cite%
{xhabli_super_2012}. This is defined similarly as $\mathrm{MIN}_{q}\left(
X\right) $ by only considering ucp maps. It is shown in \cite%
{xhabli_super_2012} that $\mathrm{OM\mathrm{IN}}_{q}\left( X\right) $ has
analogous properties as $\mathrm{MIN}_{q}\left( X\right) $ as long as one
replaces (completely) contractive linear maps with (completely) positive
unital linear maps; see \cite[\S 2.7]{lupini_universal_2014}. An $M_{q}$%
\emph{-system} will then be an operator system $X$ such that the identity
map $id:\mathrm{\mathrm{OM\mathrm{IN}}}\left( X\right) \rightarrow X$ is a
complete order isomorphism or, equivalently, $X$ admits a complete order
embedding in $C\left( K,M_{q}\right) $ for some compact Hausdorff space $K$.
The notions of $1$-exact and nuclear operator systems can be defined as for
operator spaces by replacing complete contractions with ucp maps. It turns
out that an operator system is $1$-exact or nuclear, respectively, if and
only if it is $1$-exact or nuclear as an operator space; see \cite[Theorem
3.5]{han_approximation_2011} and \cite[Proposition 5.5]%
{kavruk_quotients_2013}.

\subsection{The nuclear \texorpdfstring{$\mathbb{GS}$}{GS}-embedding theorem \label{Sub:ac-sy}}

The approximate amalgamation results for operator spaces from Subsection \ref%
{Sub:amalgamation-sp} carry over with little change to the operator systems
case. The only extra ingredient needed is \cite[Lemma 3.3]%
{lupini_universal_2014}.

\begin{lemma}
\label{Lemma:multiple-pushout-sy}Suppose that $N\in \mathbb{N}$, $X_{n},%
\widehat{X}_{n},Y$ for $n\leq N$ are finite-dimensional $M_{q}$-systems, $%
k\leq q$, and $\delta _{n}\in \left[ 0,1\right] $ for $n\leq N$. Suppose
that $\phi _{n}:X_{n}\rightarrow \widehat{X}_{n}$ and $f_{n}:X_{n}%
\rightarrow Y$ linear maps such that%
\begin{equation*}
\max \left\{ \left\Vert f_{n}\circ \phi _{n}^{-1}\right\Vert _{k},\left\Vert
\phi _{n}\circ f_{n}^{-1}\right\Vert _{k}\right\} \leq 1+\delta _{n}
\end{equation*}%
for every $n\leq N$. There exists a finite-dimensional $M_{q}$-system $%
\widehat{Y}$, a complete isometry $j:Y\rightarrow \widehat{Y}$ and complete
isometries $\widehat{f}_{n}:\widehat{X}_{n}\rightarrow Y$ such that $%
\left\Vert \widehat{f}_{n}\circ \phi _{n}-j\circ f_{n}\right\Vert _{k}\leq
100\dim \left( X_{n}\right) \delta ^{\frac{1}{2}}$. Moreover if $Z$ is an $%
M_{q}$-system, $g_{n}:\widehat{X}_{n}\rightarrow Z$ and $i:Y\rightarrow Z$
are ucp maps such that $\left\Vert g_{n}\circ \phi _{n}-i\circ
f_{n}\right\Vert _{k}\leq 100\dim \left( X_{n}\right) \delta _{n}^{\frac{1}{2%
}}$, then there exists a unique linear map $\tau :\widehat{Y}\rightarrow Z$
such that $\tau \circ j=i$ and $\tau \circ \widehat{f}_{n}=g_{n}$.
Furthermore $\tau $ is ucp.
\end{lemma}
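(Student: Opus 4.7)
The plan is to mimic the construction of Lemmas \ref{Lemma:pushout-sp} and \ref{Lemma:multiple-pushout-sp} in the operator system category, with the single extra ingredient \cite[Lemma 3.3]{lupini_universal_2014} invoked to promote near-ucp maps to genuine ucp maps. This last step will be what dictates the factor $100\dim(X_n)\delta_n^{1/2}$ in the statement.

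First I would form the algebraic direct sum $V = \widehat{X}_1 \oplus \cdots \oplus \widehat{X}_N \oplus Y$, identify all units by quotienting by $W = \mathrm{span}\{1_{\widehat{X}_n}-1_Y : n \leq N\}$, and equip $\widehat{Y} := V/W$ with the unit and involution inherited from the summands. The matrix ordering on $\widehat{Y}$ would be defined via the collection $\mathcal{F}$ of linear maps $\sigma : \widehat{Y} \to M_q$ arising from tuples $(\theta_1, \ldots, \theta_N, \theta_Y)$ of ucp maps $\theta_n : \widehat{X}_n \to M_q$ and $\theta_Y : Y \to M_q$ satisfying $\|\theta_n \circ \phi_n - \theta_Y \circ f_n\|_k \leq 100\dim(X_n)\delta_n^{1/2}$ for each $n$. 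Such tuples descend to well-defined linear maps $V/W \to M_q$ because ucp maps agree on units, and the $M_q$-system variant of the Choi--Effros axiomatization from \cite{xhabli_super_2012} endows $\widehat{Y}$ with the structure of a finite-dimensional $M_q$-system. The canonical maps $j : Y \to \widehat{Y}$ and $\widehat{f}_n : \widehat{X}_n \to \widehat{Y}$ are then ucp by construction and satisfy the asserted approximate-commutation estimate by design.

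The crux of the argument is to verify that $j$ and each $\widehat{f}_n$ are complete order embeddings, and this is where the dimensional factor is forced. Given a non-positive $y \in M_k(Y)$, I would select a ucp map $\theta_Y : Y \to M_q$ witnessing the failure of positivity, which exists since $Y$ is an $M_q$-system with $k \leq q$. For each $n$, the composition $\theta_Y \circ f_n \circ \phi_n^{-1} : \phi_n[X_n] \to M_q$ has cb-norm at most $1+\delta_n$ and is nearly unital; by injectivity of $M_q$ among $M_q$-systems it extends to a completely contractive, near-unital map on $\widehat{X}_n$. Applying \cite[Lemma 3.3]{lupini_universal_2014} converts this extension into a genuine ucp map $\theta_n : \widehat{X}_n \to M_q$ at a cost of at most $100\dim(X_n)\delta_n^{1/2}$ on $X_n$, which is exactly the tolerance built into the definition of $\mathcal{F}$. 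Hence $(\theta_1, \ldots, \theta_N, \theta_Y) \in \mathcal{F}$, and the associated $\sigma$ detects the non-positivity of $j^{(k)}(y)$. A symmetric argument, starting from a witness on $\widehat{X}_n$ and transporting it through $f_n$ and $\phi_n^{-1}$, shows each $\widehat{f}_n$ is a complete order embedding.

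Finally, the universal property is a formality, verified verbatim as in Lemma \ref{Lemma:multiple-pushout-sp}. Given $Z$ together with ucp maps $g_n$ and $i$ satisfying the stated approximate commutation, the linear map $\tau : \widehat{Y} \to Z$ defined on representatives by $\tau([x_1,\ldots,x_N,y]) = i(y) + \sum_n\bigl(g_n(x_n) - g_n(1_{\widehat{X}_n})\bigr)$ is well defined, unital, and satisfies $\tau\circ j = i$ and $\tau\circ \widehat{f}_n = g_n$; composing with any ucp $\rho : Z \to M_q$ yields an element of $\mathcal{F}$, which forces $\tau$ to be ucp. The main obstacle throughout is the perturbation step in the third paragraph: the dimensional dependence and the square root in the estimate are unavoidable consequences of the way \cite[Lemma 3.3]{lupini_universal_2014} converts operator-space-level norm estimates into genuine order estimates.
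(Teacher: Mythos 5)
Your strategy is the one the paper intends: the paper gives no written proof of this lemma, saying only that the operator space amalgamation of Lemmas \ref{Lemma:pushout-sp} and \ref{Lemma:multiple-pushout-sp} carries over with \cite[Lemma 3.3]{lupini_universal_2014} as the extra ingredient, and your second and third paragraphs implement exactly that, correctly locating the source of the factor $100\dim(X_n)\delta_n^{1/2}$ in the passage from approximately unital complete contractions to genuine ucp maps. (To make the constant depend on $\dim(X_n)$ rather than $\dim(\widehat{X}_n)$, you should perturb $\theta_Y\circ f_n\circ \phi_n^{-1}$ to a ucp map on the subspace $\phi_n[X_n]$ first and only then extend by Arveson's extension theorem, rather than extending first and perturbing the extension.)

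There is, however, a concrete error in your verification of the universal property. The map $\tau([x_1,\ldots,x_N,y])=i(y)+\sum_n\bigl(g_n(x_n)-g_n(1_{\widehat{X}_n})\bigr)$ does not do what you claim: it gives $\tau(\widehat{f}_n(x))=g_n(x)-1_Z$ and $\tau(j(y))=i(y)-N\,1_Z$, and it does not even descend to $V/W$, since it sends $1_{\widehat{X}_n}-1_Y$ to $-N\,1_Z\neq 0$. The correct map is the naive one, $\tau([x_1,\ldots,x_N,y])=i(y)+\sum_n g_n(x_n)$, exactly as in Lemma \ref{Lemma:pushout-sp}; it is well defined on the quotient precisely because $i$ and the $g_n$ are unital, it is unital because the unit of $\widehat{Y}$ is the common class of $1_Y$ and of the $1_{\widehat{X}_n}$, and it visibly intertwines $j$ with $i$ and $\widehat{f}_n$ with $g_n$. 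A smaller imprecision: $\mathcal{F}$ need not separate the points of $V/W$ (for instance, when $\delta_n=0$ and $\phi_n,f_n$ are complete order embeddings, every member of $\mathcal{F}$ annihilates $\widehat{f}_n(\phi_n(x))-j(f_n(x))$, which does not lie in $W$ for non-scalar $x$), so $\widehat{Y}$ should be defined as $V$ modulo the common kernel of $\mathcal{F}$ --- the quotient implicit in the notation $(x,y)+N$ in the proof of Lemma \ref{Lemma:pushout-sp} --- rather than as $V/W$ itself.
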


The following characterization of nuclearity for operator systems can then
be proved similarly as \ref{Lemma:ac-sp}. Again one needs to use \cite[Lemma
3.3]{lupini_universal_2014}.

\begin{lemma}
\label{Proposition:ac-sy}Suppose that $Z$ is a $1$-exact operator system.
The following statements are equivalent:

\begin{enumerate}
\item For every finite-dimensional operator systems $E,F$, $\delta >0$,
unital linear maps $\phi :E\rightarrow F$ and $f:E\rightarrow Z$ such that $%
\phi $ is injective and $\left\Vert f\circ \phi ^{-1}\right\Vert
_{cb}<1+\delta $, there is a ucp map $g:F\rightarrow Z$ such that $%
\left\Vert g_{|E}-f\right\Vert <100\dim \left( E\right) \delta ^{\frac{1}{2}%
} $;

\item For every $q\in \mathbb{N}$, $E\subset M_{q}$ subsystem, ucp map $%
f:E\rightarrow Z$, and $\varepsilon >0$, there exits a ucp map $%
g:F\rightarrow Z$ such that $\left\Vert g\circ \phi -f\right\Vert
<\varepsilon $;

\item $Z$ is nuclear.
\end{enumerate}
\end{lemma}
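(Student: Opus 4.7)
The plan is to mirror the proof of Lemma \ref{Lemma:ac-sp}, using \cite[Lemma 3.3]{lupini_universal_2014} as the replacement for the trivial rescaling trick $\gamma := \frac{1}{1+\delta} f^{-1}$ which fails in the unital setting. Recall that Lemma 3.3 of \cite{lupini_universal_2014} asserts, roughly, that a unital linear map $\phi : E \to F$ between finite-dimensional operator systems with $\|\phi\|_{cb}, \|\phi^{-1}\|_{cb} \leq 1+\delta$ can be perturbed to a ucp map $\tilde{\phi}$ with $\|\tilde{\phi} - \phi\| \leq 100\dim(E)\delta^{1/2}$. This is the source of the quantitative bound in condition (1).

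The implication (1)$\Rightarrow$(2) is immediate: given $E \subset M_q$, a ucp $f : E \to Z$, and $\varepsilon > 0$, apply (1) with $F = M_q$, $\phi$ the inclusion, and $\delta$ small enough that $100\dim(E)\delta^{1/2} < \varepsilon$; the hypothesis $\|f \circ \phi^{-1}\|_{cb} \leq 1 < 1+\delta$ holds because $f$ is ucp.

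For (2)$\Rightarrow$(3), fix a finite tuple $\overline{a}$ in $Z$ and $\varepsilon > 0$, and choose $\delta > 0$ much smaller than $\varepsilon$ (in particular smaller than a suitable power of $\varepsilon / \dim(E)^2$, to absorb later dimensional losses). Using that $Z$ is $1$-exact and the $\mathrm{OMIN}_q$-characterization from Subsection \ref{Sub:background-sy}, I would produce $q \in \mathbb{N}$, a subsystem $E \subset M_q$, and a unital linear injection $f : E \to Z$ with range $\mathrm{span}(1_Z, \overline{a})$ satisfying $\|f\|_{cb}, \|f^{-1}\|_{cb} < 1+\delta$. Applying \cite[Lemma 3.3]{lupini_universal_2014} twice yields a ucp map $\tilde{f} : E \to Z$ close to $f$ and a ucp map $\gamma_0 : \mathrm{span}(1_Z,\overline{a}) \to M_q$ close to $f^{-1}$. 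Now invoke (2) on $\tilde{f}$ to obtain a ucp map $\rho : M_q \to Z$ with $\rho|_E$ close to $\tilde{f}$, and use Arveson's extension theorem (injectivity of $M_q$ as an operator system) to extend $\gamma_0$ to a ucp map $\gamma : Z \to M_q$. A straightforward triangle-inequality estimate shows $\|\rho \circ \gamma(a_i) - a_i\| < \varepsilon$, witnessing nuclearity.

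For (3)$\Rightarrow$(1), fix finite-dimensional operator systems $E \subset F$ (via $\phi$), a unital $f : E \to Z$ with $\|f \circ \phi^{-1}\|_{cb} = 1+\eta < 1+\delta$, and a small auxiliary $\varepsilon' > 0$. By nuclearity of $Z$, select ucp maps $\rho : Z \to M_k$ and $\gamma : M_k \to Z$ with $\|\gamma \circ \rho \circ f - f\| < \varepsilon'$. The composition $\rho \circ f \circ \phi^{-1} : \phi(E) \to M_k$ is unital and nearly completely contractive (its cb-norm is at most $1+\eta$), so \cite[Lemma 3.3]{lupini_universal_2014} produces a ucp map close to it, which by Arveson extends to a ucp map $h : F \to M_k$. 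Setting $g := \gamma \circ h$ gives a ucp map $F \to Z$ whose restriction to $E$ is within $100\dim(E)\delta^{1/2}$ of $f$ provided $\eta$ and $\varepsilon'$ are chosen appropriately in terms of $\delta$ and $\dim(E)$.

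The main obstacle is purely quantitative bookkeeping: the substitution of rescaling by the ucp-perturbation of \cite[Lemma 3.3]{lupini_universal_2014} introduces a $\dim(E)\delta^{1/2}$ loss at every step where the operator space argument incurred only an additive $\delta$, and these must be chained consistently with the target bound $100\dim(E)\delta^{1/2}$ appearing in (1). There are no substantive difficulties beyond this, as injectivity of $M_q$ as an operator system and Arveson-type extension are standard.
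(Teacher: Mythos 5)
Your proposal is correct and follows exactly the route the paper intends: the paper gives no separate argument for this lemma, stating only that it is proved "similarly as Lemma \ref{Lemma:ac-sp}" with \cite[Lemma 3.3]{lupini_universal_2014} replacing the rescaling trick, which is precisely what you do. Your explicit handling of the Arveson extension and the quantitative bookkeeping is a faithful (and slightly more detailed) rendering of that same argument.
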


The operator system analog of Kirchberg's nuclear embedding theorem can also
be obtained with similar methods as Theorem \ref{Theorem:embedding-sp}.

\begin{theorem}
\label{Theorem:embedding-sy}Suppose that $X$ is a separable $1$-exact
operator system. Then $X$ is nuclear if and only if there exist a complete
order embedding $\phi :X\rightarrow \mathbb{GS}$ and a unital completely
positive projection of $\mathbb{GS}$ onto the range of $\phi $.
\end{theorem}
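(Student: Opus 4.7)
The plan is to imitate the proof of Theorem \ref{Theorem:embedding-sp} line-by-line, substituting complete contractions with ucp maps, complete isometries with complete order embeddings, and the functor $\mathrm{MIN}_q$ with $\mathrm{OMIN}_q$. Lemmas \ref{Lemma:multiple-pushout-sp} and \ref{Lemma:ac-sp} are used via their operator-system analogues Lemma \ref{Lemma:multiple-pushout-sy} and Lemma \ref{Proposition:ac-sy}. The easy direction is nearly automatic: Lemma \ref{Proposition:ac-sy} combined with the approximate injectivity that characterizes $\mathbb{GS}$ among separable $1$-exact operator systems (proved in \cite{lupini_universal_2014}) shows that $\mathbb{GS}$ itself is nuclear, hence so is the range of any ucp projection on $\mathbb{GS}$.

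For the converse, I would first establish the operator-system version of Lemma \ref{Lemma:recognize-NG}: if $(Z_n)$ is an inductive system of finite-dimensional $1$-exact $M_{q_n}$-systems with unital complete order embeddings as connecting maps, and if, for every $i,m\leq n$ and every ucp map $f:E_{m,i}\to Z_n$ with $f(\overline{a}_{m,i})\subset D_n^Z$, there is a unital complete order embedding $\widehat{f}:M_m\to Z_{n+1}$ with $\|\widehat{f}_{|E_{m,i}}-j_n\circ f\|\leq \|f^{-1}\|_{cb}-1+\varepsilon_n$, then $\lim Z_n$ is completely order isomorphic to $\mathbb{GS}$. The proof runs by the same small-perturbation argument used in Lemma \ref{Lemma:recognize-NG}, together with the ucp-variant of \cite[Proposition 4.8]{lupini_uniqueness_2014} established in \cite{lupini_universal_2014}.

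The recursive construction then unfolds exactly as in Theorem \ref{Theorem:embedding-sp}. I would produce, in parallel, finite-dimensional subsystems $X_n\subset X$ with $\bigcup_n X_n$ dense and $\|\mathrm{id}:\mathrm{OMIN}_{q_n}(X_n)\to X_n\|_{cb}\leq 1+\varepsilon_n$, finite-dimensional $M_{q_n}$-systems $Z_n$ with unital complete order embeddings $j_n:Z_n\to Z_{n+1}$ and $\eta_n:\mathrm{OMIN}_{q_n}(X_n)\to Z_n$, and ucp maps $\pi_n:Z_n\to X_n$ satisfying the obvious compatibility conditions up to error $\varepsilon_n$. At the recursive step, Lemma \ref{Proposition:ac-sy} secures the approximate lifting of ucp maps out of subsystems of $M_m$ into $X_{n+1}$; then $Z_{n+1}$ is built via Lemma \ref{Lemma:multiple-pushout-sy} as the amalgamated coproduct over $Z_n$ of the inclusion $\mathrm{OMIN}_{q_n}(X_n)\hookrightarrow\mathrm{OMIN}_{q_{n+1}}(X_{n+1})$ together with the finitely many test ucp maps $f:E_{m,i}\to Z_n$ ($i,m\leq n$) needed by condition (2) of the recognition lemma; and finally the universal property of the coproduct extends $i_n\circ\pi_n$ (after rescaling by $1/(1+\varepsilon_{n+1})$) to the required $\pi_{n+1}$.

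The only substantive deviation from the operator-space proof concerns bookkeeping of tolerances. Both Lemma \ref{Proposition:ac-sy} and Lemma \ref{Lemma:multiple-pushout-sy} inflate an input tolerance $\delta$ into an output error of the order $100\dim(\cdot)\,\delta^{1/2}$, rather than $\delta$. To achieve output error $\varepsilon_n$ at stage $n$, I would therefore feed into both lemmas the sharper auxiliary tolerance
\[
\delta_n\;\leq\;\left(\frac{\varepsilon_n}{100\,\dim(X_n)}\right)^{\!2},
\]
which is admissible since $\dim(X_n)$ is already fixed when stage $n$ begins. This is the only point at which the operator-space argument must be retuned, and it constitutes the main obstacle; once the tolerances are handled, the convergence of $\pi:=\lim_n\pi_n$ and $\eta:=\lim_n\eta_n$ and the identification $\lim_n Z_n\cong\mathbb{GS}$ through the operator-system version of Lemma \ref{Lemma:recognize-NG} proceed exactly as before.
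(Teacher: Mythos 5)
Your proposal is correct and follows essentially the same route as the paper, whose proof of Theorem \ref{Theorem:embedding-sy} consists precisely of the instruction to repeat the proof of Theorem \ref{Theorem:embedding-sp} with unital maps in place of linear maps, ucp maps in place of complete contractions, and $\mathrm{OMIN}_q$ in place of $\mathrm{MIN}_q$. Your attention to the $100\dim(\cdot)\,\delta^{1/2}$ tolerance loss is a genuine point the paper glosses over; note only that for the test maps $f:E_{m,i}\to Z_n$ the input tolerance $\left\Vert f^{-1}\right\Vert_{cb}-1$ is not at your disposal, so the operator-system version of Lemma \ref{Lemma:recognize-NG} should be stated with the correspondingly weakened error bound $100\,m^{2}\left(\left\Vert f^{-1}\right\Vert_{cb}-1\right)^{1/2}+\varepsilon_{n}$, which still suffices because the small-perturbation argument there produces maps with $\left\Vert f^{-1}\right\Vert_{cb}$ arbitrarily close to $1$.
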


In order to prove Theorem \ref{Theorem:embedding-sy} one can proceed as in
the proof of Theorem \ref{Theorem:embedding-sp}, by replacing \cite[%
Proposition 4.8]{lupini_uniqueness_2014} with the characterization of $%
\mathbb{GS}$ given by \cite[Proposition 4.2]{lupini_universal_2014}. All
linear maps should be replaced by unital linear maps. In this setting the
functor $\mathrm{OM\mathrm{IN}}_{q}$ plays the role of $\mathrm{MIN}_{q}$.

\bibliographystyle{amsplain}
\bibliography{bib-ac}

\end{document}